\newtheorem{thm}{Theorem}[section]
 \newtheorem{lem}[thm]{Lemma}
 \theoremstyle{definition}
 \theoremstyle{remark}
 \numberwithin{equation}{section}
\newcommand{\ds}{\displaystyle}
\begin{document}
\title[The Non-Abelian Tensor Square and Schur multiplier of Groups of Orders]{The Non-Abelian Tensor Square and Schur multiplier of
Groups of Orders $p^2q$, $pq^2$ and $p^2qr$ }
\author{S. H. Jafari}
\address{Department of
Mathematics, Ferdowsi University of Mashhad, Mashhad, Iran.}
\author{P. Niroomand}
\address{School of Mathematics and Computer Science,
Damghan University of Basic \\Sciences,  Damghan, Iran. }
\author{A. Erfanian}
\address{Department of
Mathematics, Ferdowsi University of Mashhad, Mashhad, Iran.}
\date{}
\maketitle
\begin{abstract}
The aim of this paper is to determine the non-abelian tensor
square and Schur multiplier of groups of square free
order and of groups of orders $p^2q$, $pq^2$
and $p^2qr$, where $p$, $q$ and $r$ are primes and $p<q<r$.

\end{abstract}
\footnotetext{ 2000 Mathematics Subject Classification: 20G40, 20J06, 19C09.
\\Keywords and phrases: Non-abelian tensor square, Schur multiplier.}

\section{Introduction}
The notion of non-abelian tensor product $G\otimes H$ of groups $G$
and $H$ which was introduced by R. Brown and J.-L. Loday \cite{br, br2} is a generalization of the usual tensor product of the
abelianized groups, on the assumption that each of $G$ and $H$ acts
on the other. Specially, for given groups $G$, $H$ each of which
acts on the other
\[ G\times H\longrightarrow G, (g,h)\longmapsto ^hg \ \ ; \ \ H \times G \longrightarrow G, (h,g)\longmapsto ^gh \] in such a way
that for all $g,g_{1} \in G$ and $h,h_{1} \in H$, \[ ^{^{^{g_{1}}}
h}g=~^{g_{1} ^{-1}}(^h (^{g_{1}}g)) \ \ \ \textrm{and} \ \ \
^{^{^{h_{1}}} g}h=~^{h_{1} ^{-1}}(^g(^{
h_{1}}h)),\hspace{2cm}(\star)
 \]
where $G$ and $H$ acts on themselves by conjugation. Then the
non-abelian tensor product $G\otimes H$ is defined to be the group
generated by symbols $g\otimes h$,  $g \in G, h \in H$, subject to
the relations
\[g_1g\otimes h=(^{g_1}g\otimes \hspace{.04cm} ^{g_1}h)(g_1 \otimes h) \ \ \ , \ \ \
g\otimes h_1h=(g\otimes h_1)\ (^{h_1}g\otimes\hspace{.04cm}^{h_1}h )
\] for all $g,g_1 \in G$, $h,h_1 \in H$, where $G$ acts on itself by
conjugation, i.e. $^{g}g_{1}=gg_1g^{-1}$, and similarly for $H$. In
particular, as the conjugation action of a group $G$ on itself
satisfies $(\star)$, the tensor square $G\otimes G$ may always be
defined. \\
\indent Following the publications of Brown and Loday's work, a
number of purely group theoretic papers have appeared on the topic.
Some investigate general structural properties of the tensor square,
while  the others are devoted to explicit descriptions for
particular groups, for instance dihedral, quaternionic, symmetric
and all groups of order at most $30$ in \cite{br3}.

 Later, Hannebauer
\cite{ha} determined the structure of tensor square of the groups
$SL(2,q),$ $PSL(2,q),$ $GL(2,q)$ and $PGL(2,q)$ for all $q \geqslant 5$
and $q \neq 9$. \\Ellis and Leonard \cite{ge} devise a computer algorithm for the
computation of tensor square of finite groups which in its
applications can handle much larger groups than those given in
\cite{br3}. Using the CAYLEY-program, they
compute the tensor square of $B(2,4)$, the 2-generator Burnside
group of exponent $4$, where $|B(2,4)|=2^{12}$. Recently, it is also improved for the groups
$SL(n,q), PSL(n,q), GL(n,q), PGL(n,q)$ for all $n,q \ge 2$ in
\cite{er} and for extra special p-groups
($p\neq 2$) in \cite{ers}. These results
extremely depend on knowing the order of $M(G)$, the Schur
multiplier of a group $G$.

In the present paper, we focus on the non-abelian tensor square of the group of
square free order and groups of orders $p^2q$, $pq^2$ and $p^2qr$,
where $p$, $q$ and $r$ are primes and $p<q<r$. It should be mentioned, generally finding the structure of $M(G)$ and $G\otimes G$ for a
class of groups is not so easy. Although some computations of $G\otimes G$ for metacyclic and polycyclic groups have been done in
\cite{br3, mo} without knowing $M(G)$. So our result included some special cases which have been summarized as follows.\\

{{\bf{Theorem A.}}}~ \textit{Let $G$ be a group of order
$n$, where $n$ is a square free number. Then
\[\ds G\otimes G \cong {\mathbb{Z}}_{n}.\]}
\indent {{\bf{Theorem B.}}}~ \textit{Let $G$ be a group of
order $p^{2}q$, where $p$ and $q$ are prime numbers
and $p<q$. The structure of $G\otimes G$ is one of the following\\
($i$)~ If $G^{ab}={\mathbb{Z}}_{p^{2}}$,  then $G\otimes G \cong {\mathbb{Z}}_{p^{2}q} $.\\
($ii$)~ If $G^{ab}={\mathbb{Z}}_{p}\times {\mathbb{Z}}_{p}$,  then $G\otimes G \cong ({{\mathbb{Z}}_{p}})^{4}\times {\mathbb{Z}}_{q}
$.\\
($iii$)~ If $G^{ab}={\mathbb{Z}}_{3}$,  then $G\otimes G \cong  {\mathbb{Z}}_{3}\times Q_{2} $, in which $Q_{2}$ is the quaternion
group of order 8.}\\
\indent {{\bf{Theorem C.}}}~ \textit{Let $G$ be a group of
order $pq^{2}$, where $p$ and $q$ are prime numbers
and $p<q$. The structure of $G\otimes G$ is one of the following\\
($i$)~ If $G'={\mathbb{Z}}_{q^{2}}$, then $G\otimes G \cong {\mathbb{Z}}_{pq^{2}} $.\\
($ii$)~ If $G'={\mathbb{Z}}_{q}\times {\mathbb{Z}}_{q}$ and $M(G)=0$ or if $G'={\mathbb{Z}}_{q}$,
then $G\otimes G\cong{\mathbb{Z}}_{p}\times ({\mathbb{Z}}_{q})^{2}$.\\
($iii$)~ If $G'={\mathbb{Z}}_{q}\times {\mathbb{Z}}_{q}$ and $M(G)={\mathbb{Z}}_{q}$,
then $G\otimes G\cong{\mathbb{Z}}_{p}\times H$, where $H$ is an extra special $q$-group of order $q^{3}$.}\\
\indent {{\bf{Theorem D.}}}~ \textit{Let $G$ be a group of
order $p^{2}qr$, where $p$, $q$ and $r$ are prime numbers
 $p<q<r$, $pq\ne 6$. The structure of $G\otimes G$ is one of the following\\
($i$)~ If $|G'|=q$, $r$ or $qr~(q\nmid r-1)$,
 then $G\otimes G \cong {\mathbb{Z}}_{p^{2}qr} $ when $G^{ab}$ is cyclic, otherwise
 $G\otimes G \cong ({\mathbb{Z}}_{p})^{4}\times {\mathbb{Z}}_{qr} $.\\
 ($ii$)~ If $|G'|=qr$~$(q\mid r-1)$,
 then $G\otimes G \cong {\mathbb{Z}}_{p^{2}}\times G' $ when  $G^{ab}$ is cyclic,
 otherwise $G\otimes G \cong ({\mathbb{Z}}_{p})^{4}\times G' $.}
\section{Basic Results}
In this section, we recall some definitions and basic results on the
tensor square which are necessary for our main theorems.

\indent Let $G$ be a group and $G\otimes G$ be the tensor square of
$G$. The exterior square $G\wedge G$ is obtained by  imposing the
additional relation $g\otimes g=1$ $(g \in G)$ on $G\otimes G.$
Moreover, we denote by $\nabla(G)$ as the subgroup generated by all
elements $g\otimes g$. The commutator map induces homomorphisms
$\kappa:G\otimes G\rightarrow G$, and $\kappa':G\wedge G \rightarrow
G$ sending $g\otimes h$ and $g\wedge h$ to $[g,h]=ghg^{-1}h^{-1}$,
denoted $\ker\kappa$ by $J_2(G)$.

\indent Results in Brown, Loday \cite{br,br2} give the
following commutative diagram with exact rows
and central extensions as columns
\begin{equation}\begin{CD}
 @.  0 @. 0\\
 @. @VVV   @VVV\\
\Gamma(G^{ab}) @>>>J_2(G)@>>>M(G)@>>>0\\
 @| @VVV @VVV\\
\Gamma(G^{ab}) @>>>G\otimes G @>>>G\wedge G@>>>1 @. \\
 @. @V\kappa VV   @V\kappa'VV\\
 @. G' @= G'\\
 @. @VVV   @VVV\\
 @. 1 @. 1\\
\end{CD}\end{equation}\\
where $\Gamma$ is the Whitehead's quadratic functor (see Whitehead \cite{wh}).\\
\indent Now let us remind the following theorems which play an
important role in
the proof of the main theorems. \\

\begin{thm}\label{a}\cite[Proposition 8]{br3}.  If $G$ is a
group in which $G'$ has a cyclic complement $C$, then $G\otimes G\cong (G\wedge
G)\times G^{ab}$ and hence $|G\otimes G|= |G| |M(G)|.$
\end{thm}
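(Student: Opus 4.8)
The plan is to exploit the exact sequence and central extension structure encoded in the Brown--Loday diagram~(2.1), together with the splitting hypothesis. Since $G'$ has a cyclic complement $C$, we have $G=G'C$ with $G'\cap C=1$, so that the quotient map $G\to G/G'=G^{ab}$ restricts to an isomorphism $C\cong G^{ab}$; in particular $G^{ab}$ is cyclic. The key point will be to show that the central extension
\[
1\longrightarrow \Gamma(G^{ab})\longrightarrow G\otimes G\longrightarrow G\wedge G\longrightarrow 1
\]
(the left column of~(2.1)) splits. Once this is established, and knowing that $\Gamma(A)$ of a cyclic group $A$ is cyclic of the same order (indeed $\Gamma(\mathbb{Z}_m)\cong\mathbb{Z}_m$ for $m$ finite, $\Gamma(\mathbb{Z})\cong\mathbb{Z}$), one identifies $\Gamma(G^{ab})$ with a central cyclic subgroup whose order equals $|G^{ab}|$. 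Combined with the other splitting $G\otimes G\cong(G\wedge G)\times G^{ab}$ that we are proving, the order formula $|G\otimes G|=|G\wedge G|\cdot|G^{ab}|=|M(G)|\,|G'|\,|G^{ab}|=|M(G)|\,|G|$ drops out immediately from the right column of~(2.1), which gives $|G\wedge G|=|M(G)||G'|$.

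The central step—showing the left column splits—is where I would do the real work. First I would recall the standard fact (from Brown--Loday, and made explicit in the literature on tensor squares) that $\nabla(G)$, the subgroup of $G\otimes G$ generated by the elements $g\otimes g$, is a central subgroup, that $\Gamma(G^{ab})$ maps onto $\nabla(G)$, and that $G\wedge G=(G\otimes G)/\nabla(G)$. So it suffices to construct a complement to $\nabla(G)$ in $G\otimes G$, or equivalently a retraction $G\otimes G\to\nabla(G)$. The idea is to use the cyclic complement $C$: if $C=\langle c\rangle$, then $\nabla(G)$ should be generated (as the relevant central quotient of $\Gamma(G^{ab})$) by the single element $c\otimes c$, and one writes down a candidate homomorphism from $G\otimes G$ to an appropriate cyclic group sending $g\otimes g$-type generators correctly. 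Alternatively—and this is probably cleaner—I would use the known crossed-module/exterior-square machinery: there is a homomorphism $G\wedge G\to G\otimes G$ splitting the projection precisely when a certain cohomological obstruction vanishes, and the cyclicity of $G^{ab}=C$ forces this obstruction (which lives in a group built from $H_*(C)$ and vanishes because $C$ is cyclic, hence of cohomological dimension $\le 1$ rationally and with trivial relevant Schur multiplier $M(C)=0$) to die.

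Concretely, I would proceed as follows: (1) Set up $G=G'\rtimes C$ with $C=\langle c\rangle$ cyclic and note $G^{ab}\cong C$. (2) Recall that $\nabla(G)$ is central in $G\otimes G$ and that the natural surjection $\Gamma(G^{ab})\twoheadrightarrow\nabla(G)$ together with $\Gamma(G^{ab})\hookrightarrow G\otimes G$ (from~(2.1)) forces $\Gamma(G^{ab})\cong\nabla(G)$, so $\nabla(G)$ is cyclic generated by the image of $c\otimes c$. (3) Construct a homomorphism $\phi:G\otimes G\to\nabla(G)$ with $\phi|_{\nabla(G)}=\mathrm{id}$—the natural candidate sends $g\otimes h\mapsto (\bar g\otimes\bar h)(\bar h\otimes\bar g)$ computed in the abelian tensor $G^{ab}\otimes_{\mathbb{Z}}G^{ab}$ and then pushed into $\Gamma(G^{ab})\cong\nabla(G)$ via the map $a\otimes a\mapsto\gamma(a)$; one checks this respects the defining relations of $G\otimes G$ because the actions become trivial on $G^{ab}$. (4) Conclude $G\otimes G\cong\ker\phi\times\nabla(G)$, and identify $\ker\phi$ with $G\wedge G$ by checking $\ker\phi\cap\nabla(G)=1$ and $\ker\phi\cdot\nabla(G)=G\otimes G$ so that $\ker\phi\cong(G\otimes G)/\nabla(G)=G\wedge G$. (5) Finally, read off $|G\otimes G|=|G\wedge G|\cdot|\nabla(G)|=|G\wedge G|\cdot|G^{ab}|$ and use the right column of~(2.1), $1\to M(G)\to G\wedge G\to G'\to 1$, to get $|G\wedge G|=|M(G)||G'|$, hence $|G\otimes G|=|M(G)||G|$.

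The main obstacle I anticipate is verifying that the candidate retraction $\phi$ is well-defined—i.e., that it kills all the tensor-square relators—since this is exactly the place where the hypothesis ``$G'$ has a \emph{cyclic} complement'' must be used in an essential way; without cyclicity one only gets a map into $G^{ab}\otimes G^{ab}$, which is generally strictly larger than $\nabla(G)$, and the splitting can genuinely fail. I expect that the cyclicity of $C$ enters through the fact that $\Gamma$ of a cyclic group coincides with its divided square and that $M(C)=0$, which is what collapses $\Gamma(G^{ab})$ onto $\nabla(G)$ and makes the retraction land in a group small enough to be a genuine direct factor. Everything else—the centrality of $\nabla(G)$, the exactness in~(2.1), the order count—is either quoted from Brown--Loday or is a routine diagram chase.
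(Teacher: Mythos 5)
The paper does not actually prove this statement; it quotes it from \cite[Proposition 8]{br3}, so your argument has to stand on its own. Your overall strategy --- identify $\nabla(G)$ as a central cyclic subgroup of order $|G^{ab}|$ and split it off as a direct factor whose complement $\ker\bigl(\pi:G\otimes G\to G^{ab}\otimes_{\mathbb{Z}}G^{ab}\bigr)$ is isomorphic to $G\wedge G$ --- is the right one, and the final order count via $|G\wedge G|=|M(G)|\,|G'|$ is fine. But the execution has genuine gaps. First, your step (2) is false: diagram (2.1) does not assert that $\Gamma(G^{ab})\to G\otimes G$ is injective (the rows carry no $0$ on the left), and indeed $\Gamma(G^{ab})\not\cong\nabla(G)$ already for $G=\mathbb{Z}_2$, which satisfies the hypothesis: there $\Gamma(G^{ab})\cong\mathbb{Z}_4$ while $\nabla(G)=G\otimes G\cong\mathbb{Z}_2$. (Relatedly, $\Gamma(\mathbb{Z}_m)\cong\mathbb{Z}_{2m}$ for even $m$, not $\mathbb{Z}_m$.) All that survives is that $\nabla(G)$ is a cyclic \emph{quotient} of $\Gamma(G^{ab})$, generated by $c\otimes c$.

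Second, and more seriously, you never establish the one fact on which the whole splitting hinges: that $c\otimes c$ has order exactly $n=|G^{ab}|$, equivalently that $\pi$ restricted to $\nabla(G)$ is injective. Surjectivity of $\nabla(G)\to G^{ab}\otimes_{\mathbb{Z}}G^{ab}\cong\mathbb{Z}_n$ gives order at least $n$; the reverse inequality is exactly where the complement hypothesis is used concretely: since $C\cap G'=1$, the generator $c$ has order $n$ in $G$, and the expansion $c^k\otimes c=(c\otimes c)^k$ (valid because $c$ acts trivially on itself) yields $(c\otimes c)^n=c^n\otimes c=1\otimes c=1$. Your appeal to cohomological dimension and $M(C)=0$ does not substitute for this computation. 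Third, your explicit candidate retraction $g\otimes h\mapsto(\bar g\otimes\bar h)(\bar h\otimes\bar g)$ is the symmetrization map and restricts to the \emph{squaring} map on $\nabla(G)$ (it sends $c\otimes c$ to $(c\otimes c)^2$), so it is not a retraction whenever $n$ is even --- for $G=S_3$ it is trivial on $\nabla(G)\cong\mathbb{Z}_2$. The correct retraction is simply $\pi$ followed by the inverse of the isomorphism $\nabla(G)\to G^{ab}\otimes_{\mathbb{Z}}G^{ab}$ obtained above; with that repair, centrality of $\nabla(G)$ and $\nabla(G)\cdot\ker\pi=G\otimes G$ give the asserted direct decomposition.
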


\begin{thm}\label{b}\cite[Theorem A]{ers}.
Let $G$ be a group such that
$G^{ab}=\ds\prod_{i=1}^n\prod_{j=1}^{k_i}\mathbb{Z}_{p_i^{e_{ij}}}$
where $1\leq e_{i1}\leq e_{i2}\leq ... \leq e_{ik_i}$ for all $1\leq
i \leq n$, $k_i\in \mathbb{N}$ and $p_i\neq 2$. Then
\[\ds|G\otimes G|=\prod_{i=1}^np_i^{d_i}|G||M(G)|\] in which $\ds
d_i=\sum_{j=1}^{k_i}(k_i-j)e_{ij}.$
\end{thm}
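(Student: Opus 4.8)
The plan is to read $|G\otimes G|$ off the commutative diagram $(2.1)$, reducing it to the order of the central subgroup $\nabla(G)$, and then to identify $\nabla(G)$ with $\Gamma(G^{ab})$ using the hypothesis that $G^{ab}$ has odd order. From the bottom row of $(2.1)$, $\nabla(G)$ is the image of $\Gamma(G^{ab})\to G\otimes G$, equivalently the kernel of $G\otimes G\to G\wedge G$, so $|G\otimes G|=|\nabla(G)|\,|G\wedge G|$; from the right-hand column, $M(G)=\ker(\kappa'\colon G\wedge G\to G')$ and $\kappa'$ is onto $G'$, so $|G\wedge G|=|M(G)|\,|G'|$. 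Since $|G|=|G^{ab}|\,|G'|$, combining these gives
\[|G\otimes G|=\frac{|\nabla(G)|}{|G^{ab}|}\,|G|\,|M(G)|,\]
so the theorem becomes the assertion $|\nabla(G)|=|G^{ab}|\prod_{i=1}^n p_i^{d_i}$.

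The heart of the matter is to show that the map $\Gamma(G^{ab})\to G\otimes G$ of $(2.1)$ is \emph{injective}, so that $\nabla(G)\cong\Gamma(G^{ab})$; it is automatically onto $\nabla(G)$, being given on generators by $\gamma(\bar g)\mapsto g\otimes g$. I would deduce injectivity from naturality of $(2.1)$ applied to the quotient map $G\to G^{ab}$: the left-hand vertical map of the resulting square is $\Gamma$ of the induced map $G^{ab}\to (G^{ab})^{ab}=G^{ab}$, which is the identity, so the composite $\Gamma(G^{ab})\to G\otimes G\to G^{ab}\otimes G^{ab}$ equals the standard map $\Gamma(G^{ab})\to G^{ab}\otimes G^{ab}$, $\gamma(\bar g)\mapsto\bar g\otimes\bar g$. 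Moreover, for an abelian group, on which the conjugation action is trivial, the non-abelian tensor square coincides with the usual tensor product (the defining relations reduce to bilinearity and a standard commutator identity forces commutativity), so this target is $G^{ab}\otimes_{\mathbb{Z}}G^{ab}$. It therefore suffices to prove that $\Gamma(A)\to A\otimes_{\mathbb{Z}}A$ is injective whenever $A$ has odd order. Using Whitehead's splitting $\Gamma(B\oplus C)\cong\Gamma(B)\oplus(B\otimes C)\oplus\Gamma(C)$ and the matching decomposition of $A\otimes_{\mathbb{Z}}A$, the summand $B\otimes C$ of $\Gamma$ maps injectively into $(B\otimes C)\oplus(C\otimes B)$ via $x\mapsto x\otimes\,\oplus\,\text{(its transpose)}$, so one reduces to the cyclic case $\Gamma(\mathbb{Z}_{p^e})\to\mathbb{Z}_{p^e}\otimes\mathbb{Z}_{p^e}$ with $p$ odd; there both groups equal $\mathbb{Z}_{p^e}$ and a generator maps to a generator, hence the map is an isomorphism.

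It then remains to compute $|\Gamma(G^{ab})|$. Since tensor products of groups of coprime order vanish, $\Gamma$ respects the primary decomposition $G^{ab}=\prod_i P_i$ with $P_i=\prod_{j=1}^{k_i}\mathbb{Z}_{p_i^{e_{ij}}}$, and $\Gamma(P_i)\cong\bigoplus_{j}\Gamma(\mathbb{Z}_{p_i^{e_{ij}}})\oplus\bigoplus_{j<l}\bigl(\mathbb{Z}_{p_i^{e_{ij}}}\otimes\mathbb{Z}_{p_i^{e_{il}}}\bigr)$. The first block has order $\prod_j p_i^{e_{ij}}=|P_i|$, and since $e_{i1}\le\dots\le e_{ik_i}$ the second has order $\prod_{j<l}p_i^{e_{ij}}=p_i^{\sum_j(k_i-j)e_{ij}}=p_i^{d_i}$; hence $|\Gamma(G^{ab})|=\prod_i|P_i|\,p_i^{d_i}=|G^{ab}|\prod_i p_i^{d_i}$. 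Substituting $|\nabla(G)|=|\Gamma(G^{ab})|$ into the displayed identity yields the theorem; in particular, when $G^{ab}$ is cyclic every $d_i=0$ and the formula reduces to $|G\otimes G|=|G||M(G)|$, consistent with Theorem~\ref{a}.

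The one genuinely delicate point is the injectivity step of the second paragraph: this is precisely where $p_i\ne2$ is indispensable (for $G=\mathbb{Z}_2$ one has $\Gamma(\mathbb{Z}_2)=\mathbb{Z}_4$ while $\nabla(G)=\mathbb{Z}_2$), and some care is needed to check that routing through $G^{ab}\otimes_{\mathbb{Z}}G^{ab}$ really detects all of $\Gamma(G^{ab})$ and not merely its image in the exterior square $G^{ab}\wedge G^{ab}$. Everything else is bookkeeping with the diagram $(2.1)$ and with the elementary structure of $\Gamma$ on finite abelian groups.
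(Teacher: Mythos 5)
The paper does not actually prove this theorem: it is imported verbatim from the unpublished reference \cite{ers}, so there is no in-house argument to compare against. Your proposal supplies a complete and, as far as I can check, correct proof. The bookkeeping in the first paragraph is exactly right: exactness of the bottom row and right column of diagram $(2.1)$ gives $|G\otimes G|=|\nabla(G)|\,|M(G)|\,|G'|$, reducing everything to $|\nabla(G)|=|G^{ab}|\prod_i p_i^{d_i}$. The essential step --- injectivity of $\Gamma(G^{ab})\to G\otimes G$, detected by composing with the functorial map to $G^{ab}\otimes_{\mathbb{Z}}G^{ab}$ and checking that the Whitehead map $\Gamma(A)\to A\otimes_{\mathbb{Z}}A$ is injective for $A$ of odd order --- is sound: on the cyclic blocks $\Gamma(\mathbb{Z}_{p^e})\to\mathbb{Z}_{p^e}\otimes\mathbb{Z}_{p^e}$ is an isomorphism for odd $p$, and each cross summand $B\otimes C$ injects via $x\mapsto(x,\tau x)$ into $(B\otimes C)\oplus(C\otimes B)$, with all blocks landing in disjoint summands of $A\otimes A$. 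You correctly isolate this as the only place $p_i\neq 2$ is used, and your $\mathbb{Z}_2$ counterexample shows the hypothesis is not removable by this route. The order count $|\Gamma(G^{ab})|=|G^{ab}|\prod_i p_i^{d_i}$, using $\mathbb{Z}_{p^a}\otimes\mathbb{Z}_{p^b}\cong\mathbb{Z}_{p^{\min(a,b)}}$ and the ordering $e_{i1}\leq\dots\leq e_{ik_i}$, is also correct, and the sanity checks against Theorem \ref{a} and against Lemma \ref{e}$(ii)$ come out consistent. The only things you take on faith are the exactness statements in $(2.1)$ and the functoriality of $\otimes$ under $G\to G^{ab}$, both of which are legitimate citations to Brown--Loday; given that, your argument is a genuine improvement on the paper, which offers no proof at all.
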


\section{Proof of Main Theorems }
In this section we prove the main theorem as we mentioned earlier
in section one.

\begin{lem}\label{c} Let $G$ be a finite non-abelian group.\\
($i$)~ If $G$ is a square free order group, then $G'$ is cyclic.\\
($ii$)~ If $G$ is a group of order $p^{2}q$, then $G'=\mathbb{Z}_{q}$ or $G'=\mathbb{Z}_{2}\times \mathbb{Z}_{2}$.\\
($iii$)~ If $G$ is a group of order $pq^{2}$, then $G'=\mathbb{Z}_{q}$ or $G'=\mathbb{Z}_{q^{2}}$ or $G'=\mathbb{Z}_{q}\times
\mathbb{Z}_{q}$.\\
($iv$)~ If $G$ is a group of order $p^{2}qr$ and $pq\neq 6$, then $|G'|=q, r $ or $qr$.\\
($v$)~ If $G$ is a group of order $p^{2}qr$ and $pq=6$, then
$|G'|=3,r,3r,4$ or $4r$. \end{lem}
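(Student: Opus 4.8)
The plan is to treat the five parts in the stated order, reducing ($iv$) and ($v$) to ($ii$) by producing a normal Sylow $r$-subgroup. For ($i$), a group of square free order has every Sylow subgroup of prime order, hence cyclic, so by a classical theorem it is metacyclic: $G = A\rtimes B$ with $A,B$ cyclic of coprime orders. Then $G/A\cong B$ is abelian, so $G'\leq A$ is cyclic. For ($iii$), the number of Sylow $q$-subgroups divides $p$ and is $\equiv 1\pmod q$, hence equals $1$ since $p<q$; thus the Sylow $q$-subgroup $Q$ (of order $q^2$) is normal, $G/Q$ has prime order $p$ and is abelian, so $1\neq G'\leq Q$. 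As every nontrivial subgroup of an abelian group of order $q^2$ is isomorphic to $\mathbb{Z}_q$, $\mathbb{Z}_{q^2}$ or $\mathbb{Z}_q\times\mathbb{Z}_q$, part ($iii$) follows.

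For ($ii$) I would again count Sylow $q$-subgroups: $n_q\mid p^2$ and $n_q\equiv 1\pmod q$ leave $n_q\in\{1,p^2\}$, the value $p$ being impossible since $q\nmid p-1$. If $n_q=1$ then $Q\trianglelefteq G$ has order $q$, the quotient $G/Q$ is abelian of order $p^2$, and $1\neq G'\leq Q$ forces $G'=\mathbb{Z}_q$. If $n_q=p^2$ then $q\mid p^2-1$, which together with $p<q$ forces $q\mid p+1$, i.e. $(p,q)=(2,3)$ and $|G|=12$; a direct inspection of the non-abelian groups of order $12$ gives $G'=\mathbb{Z}_3$ (for $D_6$ and $\mathrm{Dic}_3$) or $G'=\mathbb{Z}_2\times\mathbb{Z}_2$ (for $A_4$).

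For ($iv$) and ($v$) the crucial point is that the Sylow $r$-subgroup $R$ (of order $r$) is normal. First, $G$ is solvable: the only non-abelian simple group of order $p^{2}qr$ with $p<q<r$ distinct primes is $A_5$ (of order $2^2\cdot3\cdot5$, since no simple group order of the form $2^a3^b5^c\dots$ besides $60$ has $2$-part only $p^2$ and exactly three prime divisors), and since every non-solvable group of order $60$ is simple, the non-solvable case occurs only for $(p,q,r)=(2,3,5)$ with $G\cong A_5$ --- excluded in ($iv$) by $pq\neq 6$, and handled by hand in ($v$). So let $G$ be solvable with Fitting subgroup $F=F(G)$; then $F\neq 1$ and $C_G(F)\leq F$. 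I claim $r$ divides $|F|$: otherwise $F=O_p(G)\times O_q(G)$ is an $r'$-group with $|O_p(G)|\leq p^2$ and $|O_q(G)|\leq q$ (both abelian), so every prime divisor of $|\mathrm{Aut}(F)|$ divides $p(p-1)^2(p+1)(q-1)$; but $p<q<r$ gives $r\geq p+2$, so $r$ divides none of $p-1,\,p,\,p+1,\,q-1$, whence $r\nmid|\mathrm{Aut}(F)|$. Since $G/C_G(F)$ embeds in $\mathrm{Aut}(F)$ while $C_G(F)\leq F$ is an $r'$-group, this would force $r\mid|G/C_G(F)|$ --- a contradiction. Hence $O_r(G)\neq 1$, and as $r$ divides $|G|$ exactly once, $R=O_r(G)\trianglelefteq G$. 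Now $G/R$ has order $p^2q$, so by ($ii$) the subgroup $(G/R)'=G'R/R$ is trivial, $\mathbb{Z}_q$, or (only when $(p,q)=(2,3)$) $\mathbb{Z}_2\times\mathbb{Z}_2$; since $R$ has prime order, $|G'|=|(G/R)'|\cdot|G'\cap R|$ with $|G'\cap R|\in\{1,r\}$ and $G'\neq 1$. This gives $|G'|\in\{q,r,qr\}$ in ($iv$) and $|G'|\in\{3,r,3r,4,4r\}$ in ($v$) (with $A_5$ disposed of separately).

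The main obstacle is exactly the normality of $R$ in ($iv$)--($v$): the idea of exploiting the self-centralizing Fitting subgroup is what makes this tractable, after which everything reduces to bounding $|\mathrm{Aut}(F)|$ over the finitely many shapes of $F$ and to isolating the exceptional group $A_5$, which is precisely why the hypothesis $pq\neq 6$ is imposed in ($iv$). Parts ($i$), ($ii$), ($iii$) and the reduction of ($iv$)--($v$) to ($ii$) are otherwise routine Sylow theory.
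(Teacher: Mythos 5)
Your arguments for (i)--(iii) are correct and follow the same Sylow-theoretic route as the paper, which only writes out case (ii) and declares the rest ``similar''. For (iv)--(v) you supply a genuinely necessary ingredient that the paper's ``similar'' conceals: naive Sylow counting does \emph{not} force the Sylow $r$-subgroup to be normal, since $n_r\equiv 1 \pmod r$ with $n_r\mid p^2q$ still leaves $n_r=pq$ or $p^2q$ open a priori (indeed $n_5=6=pq$ in $A_5$); your derivation of $O_r(G)\neq 1$ from $C_G(F(G))\leq F(G)$ together with the bound on the primes dividing $|\mathrm{Aut}(F(G))|$ is clean and correct, and the reduction of $G/R$ to case (ii) then works. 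The one external input you should cite properly is the solvability step: ``the only non-abelian simple group of order $p^2qr$ is $A_5$'' is true, but it is a classical theorem rather than the one-line remark you offer for it.

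The genuine gap is your treatment of $A_5$ in part (v). You assert that $A_5$ is ``handled by hand'' and ``disposed of separately'', but it cannot be: $A_5$ has order $60=2^2\cdot 3\cdot 5$, so it falls under (v) with $(p,q,r)=(2,3,5)$, and $A_5$ is perfect, so $|A_5'|=60=12r$, which does not belong to the claimed list $\{3,\,r,\,3r,\,4,\,4r\}=\{3,5,15,4,20\}$. In other words, $A_5$ is a counterexample to (v) as stated; your argument proves (v) only for solvable $G$, and no separate handling will place $|A_5'|$ into that list. This is arguably an oversight of the paper itself (the same omission propagates to the discussion of the case $pq=6$ at the end of the proof of Theorem D), but your writeup must either add the hypothesis that $G$ is solvable (equivalently $G\not\cong A_5$) to part (v), or enlarge the list of possible values of $|G'|$, rather than claim the exceptional group has been dealt with.
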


\begin{proof}
One may use sylow theorems for examples in case ($ii$) to show the
number of $p$-sylow subgroups of $G$ is 1 or $q$ and the number of
$q$-sylow subgroups of $G$ is 1 or $p^{2}$. If $G$ has one $q$-sylow
subgroup $Q$, then $G/Q$ is abelian and so $G'=\mathbb{Z}_{q}$.
Otherwise $p=2$ and $q=3$, hence $|G|=12$ and we should have $G\cong A_{4}$. \\
\indent The proof of other cases is similar and we omit
it.\end{proof}

 \begin{lem}\label{d}Let $G$ be a finite non-abelian group.\\
($i$)~ If $G$ is a square free order group, then $M(G)=0$ .\\
($ii$)~ If $G$ is a group of order $p^{2}q$, then
\[M(G)=\left\{\begin{array}{lcl}
0&& ~~~~if~~~G'=\mathbb{Z}_{q}~and~ G^{ab}=\mathbb{Z}_{p^{2}}\vspace{.3cm}\\
\mathbb{Z}_{p}&&~~~~if~~~G'=\mathbb{Z}_{q}~and~ G^{ab}=\mathbb{Z}_{p}\times \mathbb{Z}_{p}\vspace{.3cm}\\
\mathbb{Z}_{2}&&~~~~if~~~G'=\mathbb{Z}_{2}\times \mathbb{Z}_{2}\\
\end{array}\right.\]

\noindent($iii$)~ If $G$ is a group of order $pq^{2}$, then
\[M(G)=\left\{\begin{array}{lcl}
0&& ~~~~if~~~G'=\mathbb{Z}_{q}\vspace{.3cm}\\
0&& ~~~~if~~~G'=\mathbb{Z}_{q^{2}} \vspace{.3cm}\\
0~or~\mathbb{Z}_{q}&&~~~~if~~~G'=\mathbb{Z}_{q}\times \mathbb{Z}_{q}~\\
\end{array}\right.\]

\noindent($iv$)~ If $G$ is a group of order $p^{2}qr$ and $pq\neq 6$, then
\[M(G)=\left\{\begin{array}{lcl}
0&&~~~~if~~~ G^{ab}~~is~~cyclic\\
\mathbb{Z}_{p}&&~~~~Otherwise\\
\end{array}\right.\]

\noindent($v$)~ If $G$ is a group of order $p^{2}qr$ and $pq=6$, then $M(G)$ is as the same as part $(iv)$. Moreover if
$|G'|=4~or~4r$,
then $M(G)=\mathbb{Z}_{2}$.
\end{lem}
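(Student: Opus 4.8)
The plan is to compute $M(G)$ in each case by combining three standard tools: the fact that for a finite group, $M(G)$ is the product of the $p$-parts $M(G)_p$ and each $M(G)_p$ embeds (via restriction–corestriction) into $M(P)$ for a Sylow $p$-subgroup $P$; the bound $|M(G)| \le |M(N)|\,|M(G/N)|\,|N \otimes G/N|$-type arguments coming from the five-term exact sequence $H_2(G) \to H_2(G/N) \to N/[N,G] \to G^{ab} \to (G/N)^{ab} \to 0$; and the explicit presentations of the finitely many isomorphism types in each order class, which are already organised by the value of $G'$ and $G^{ab}$ in Lemma~\ref{c}. First I would dispose of part $(i)$: a square-free order group $G$ is metacyclic (indeed supersolvable with cyclic Sylow subgroups), hence has trivial Schur multiplier, since metacyclic groups with $G'$ having a cyclic complement satisfy $M(G)=0$; alternatively one notes every Sylow subgroup is cyclic so $M(G)_p \hookrightarrow M(P) = 0$ for all $p$.

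For part $(ii)$, order $p^2 q$: using Lemma~\ref{c} the only cases are $G'=\mathbb{Z}_q$ (with $G^{ab}$ either $\mathbb{Z}_{p^2}$ or $\mathbb{Z}_p\times\mathbb{Z}_p$) and $G=A_4$ with $G'=\mathbb{Z}_2\times\mathbb{Z}_2$. When $G'=\mathbb{Z}_q$, the Sylow $q$-subgroup is normal with cyclic quotient, so $M(G)$ is a $p$-group embedding into $M(P)$; for $P=\mathbb{Z}_{p^2}$ this is $0$, and for $P=\mathbb{Z}_p\times\mathbb{Z}_p$ one gets $M(G) \le \mathbb{Z}_p$, with equality forced because the five-term sequence applied to $N=\mathbb{Z}_q$ gives $M(G) \twoheadrightarrow M(G^{ab}) = \mathbb{Z}_p$ (the map $N/[N,G]\to G^{ab}$ being zero here, as $[N,G]=N$). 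The case $A_4$ is classical: $M(A_4)=\mathbb{Z}_2$. Part $(iii)$, order $pq^2$, runs the same way with roles of $p$ and $q$ interchanged: $M(G)$ is a $q$-group inside $M(Q)$ where $Q$ is the Sylow $q$-subgroup. If $Q=\mathbb{Z}_{q^2}$ then $M(G)=0$; if $Q=\mathbb{Z}_q\times\mathbb{Z}_q$ then $M(G)\le\mathbb{Z}_q$ and both values genuinely occur depending on the action of $\mathbb{Z}_p$ on $Q$ — the split non-abelian extension with $G'=\mathbb{Z}_q\times\mathbb{Z}_q$ has $M(G)=\mathbb{Z}_q$ while certain other configurations collapse it to $0$; this is exactly why the statement leaves the alternative open when $G'=\mathbb{Z}_q\times\mathbb{Z}_q$, and the case $G'$ cyclic again gives $0$ since then $G$ is metacyclic with $G'$ admitting a cyclic complement.

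For parts $(iv)$ and $(v)$, order $p^2qr$: by Lemma~\ref{c} (when $pq\ne 6$) we have $|G'|\in\{q,r,qr\}$, so $G'$ has order coprime to $p$ and the Hall $\{q,r\}$-part behaves cyclically (each Sylow in $G'$ is cyclic of prime order, or $G'$ is a cyclic product when $q\mid r-1$), whence $M(G)$ is again a $p$-group sitting inside $M(P)$ for the Sylow $p$-subgroup $P$ of order $p^2$. When $G^{ab}$ is cyclic, $P$ must be cyclic and $M(P)=0$ gives $M(G)=0$; when $G^{ab}$ is non-cyclic it contains $\mathbb{Z}_p\times\mathbb{Z}_p$, forcing $P=\mathbb{Z}_p\times\mathbb{Z}_p$ and $M(G)\le\mathbb{Z}_p$, with equality again from the five-term sequence with $N$ the Hall $\{q,r\}$-subgroup, since $[N,G]\supseteq G'$ has order divisible by the full $\{q,r\}$-part so $N/[N,G]$ contributes nothing and $M(G)\twoheadrightarrow M(G^{ab})=\mathbb{Z}_p$. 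For $(v)$, $pq=6$, the extra possibilities $|G'|\in\{4,4r\}$ arise precisely when a Sylow $2$-subgroup acts nontrivially inside $G'$; these groups have $G'\supseteq\mathbb{Z}_2\times\mathbb{Z}_2$ and one checks, via the embedding $M(G)_2\hookrightarrow M(S)$ for $S$ a Sylow $2$-subgroup together with the restriction being onto from the $A_4$-type subquotient, that $M(G)=\mathbb{Z}_2$; the remaining subcases reduce to part $(iv)$ verbatim.

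I expect the main obstacle to be the \emph{non}-vanishing claims — showing $M(G)=\mathbb{Z}_p$ (or $\mathbb{Z}_2$) rather than just $M(G)\le\mathbb{Z}_p$ — and, relatedly, pinning down exactly when the order-$pq^2$ case with $G'=\mathbb{Z}_q\times\mathbb{Z}_q$ gives $0$ versus $\mathbb{Z}_q$. The vanishing direction is easy from Sylow embeddings, but the surjectivity of $M(G)\to M(G^{ab})$ requires checking that the connecting map $N/[N,G]\to G^{ab}$ in the five-term sequence is zero, i.e. that the normal Hall subgroup $N$ one quotients by satisfies $[N,G]=N$; this holds because in every relevant case $N\le G'$ or $N$ is a sum of $G$-chief factors on which $G$ acts nontrivially, but it must be verified type by type against the classification underlying Lemma~\ref{c}. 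For the $pq^2$ ambiguity one simply exhibits a concrete group realising each value (the semidirect product $\mathbb{Z}_p\ltimes(\mathbb{Z}_q\times\mathbb{Z}_q)$ with $p\mid q+1$, acting irreducibly, versus one with a reducible or suitably chosen action), so no single hard lemma is needed, only careful bookkeeping.
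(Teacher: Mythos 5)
Your route is genuinely different from the paper's. The paper handles everything with two tools: the criterion that a group all of whose Sylow subgroups are cyclic has trivial multiplier (disposing of part $(i)$ and the cyclic-$G^{ab}$ subcases), and, for the non-cyclic cases, Schur--Zassenhaus plus Karpilovsky's splitting result \cite[Corollary 2.2.6]{ka} for a semidirect product $N\rtimes T$ with $(|N|,|T|)=1$, which gives $M(G)\cong M(T)$ directly; the remaining parts are declared ``similar.'' You instead get upper bounds from the embedding of the $p$-primary part of $M(G)$ into $M(P)$ for a Sylow $p$-subgroup $P$, and lower bounds from surjectivity of $M(G)\to M(G/N)$ in the five-term homology sequence. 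Both are standard and both work; your version has the advantage of not needing the coprime-splitting theorem and of making the non-vanishing direction explicit (the paper's citation does that work invisibly), at the cost of having to verify $N/[N,G]$ behaves well case by case -- which is exactly where you should be careful.

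Two soft spots. First, in part $(iv)$ you take $N$ to be the Hall $\{q,r\}$-subgroup and assert $[N,G]=N$ ``because $[N,G]\supseteq G'$ has order divisible by the full $\{q,r\}$-part.'' This is false when $|G'|=q$ or $r$: one always has $[N,G]\le G'$, so $N/[N,G]$ has order at least $qr/|G'|>1$. The conclusion survives because the map $N/[N,G]\to G^{ab}$ is then injective (its kernel is the image of $M(G/N)$, which must vanish), but the cleanest fix is to take $N=G'$ throughout: then $G'/[G',G]$ is simultaneously a $p'$-group and a quotient of the $p$-group $M(G^{ab})$, hence trivial, giving $M(G)\twoheadrightarrow M(G^{ab})=\mathbb{Z}_p$ with no case analysis. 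Second, in part $(iii)$ the subcase $G'=\mathbb{Z}_q$ with Sylow $q$-subgroup $\mathbb{Z}_q\times\mathbb{Z}_q$ is not settled by ``$G$ is metacyclic with a cyclic complement'': the Sylow embedding only gives $M(G)\le\mathbb{Z}_q$ there, and cyclic $G'$ with a cyclic complement does not by itself force $M(G)=0$. You need either the explicit multiplier formula for split metacyclic groups or, more simply, Maschke's theorem to split the Sylow $q$-subgroup as $C_Q(\mathbb{Z}_p)\times[Q,\mathbb{Z}_p]$, so that $G\cong(\mathbb{Z}_q\rtimes\mathbb{Z}_p)\times\mathbb{Z}_q$ and the K\"unneth-type formula gives $M(G)=0$. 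With these two repairs your argument is complete and, if anything, more self-contained than the paper's.
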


\begin{proof} $(i)$ Since all Sylow subgroups of G are cyclic, so $M(G)=0$.
For $(ii)$, if $G'=\mathbb{Z}_{q}~and~ G^{ab}=\mathbb{Z}_{p^{2}}$,
then again all Sylow subgroups of G are cyclic and therefore
$M(G)=0$. In the case $G'=\mathbb{Z}_{q}~and~
G^{ab}=\mathbb{Z}_{p}\times \mathbb{Z}_{p}$, we can see that $|G'|$
and $|G^{ab}|$ are coprime, so by Schur-Zassenhaus Lemma, $G'$ has a
complement. The result follows from \cite[Corollary 2.2.6]{ka}. \\The proof of the other parts is similar.\end{proof}

\begin{lem} \label{e} Let $G$ be a finite non-abelian group.\\
\noindent($i$)~ If $G$ is a square free order group of order $n$, then $|G\otimes G|=n$ .\\
\noindent($ii$)~ If $G$ is a group of order $p^{2}q$, then
\[|G\otimes G|=\left\{\begin{array}{lcl}
p^{2}q&&~ if~G^{ab}=\mathbb{Z}_{p^{2}}\vspace{.3cm}\\
{p}^{4}q&& ~if~ G^{ab}=\mathbb{Z}_{p}\times \mathbb{Z}_{p} \vspace{.3cm}\\
24~&& ~if~G^{ab}=\mathbb{Z}_{3}\\
\end{array}\right.\]

\noindent($iii$)~ If $G$ is a group of order $pq^{2}$, then
\[|G\otimes G|=\left\{\begin{array}{lcl}
pq^{2}&&~if~ G'=\mathbb{Z}_{q^{2}}\vspace{.3cm}\\
pq^{2}&&~if~G'=\mathbb{Z}_{q}~or ~G'=\mathbb{Z}_{q}\times \mathbb{Z}_{q}~and ~M(G)=0  \vspace{.3cm}\\
pq^{3}&& ~if~G'=\mathbb{Z}_{q}\times \mathbb{Z}_{q}~and~M(G)=\mathbb{Z}_{q} \\
\end{array}\right.\]

\noindent($iv$)~ If $G$ is a group of order $p^{2}qr$ and $pq\neq 6$, then
\[|G\otimes G|=\left\{\begin{array}{lcl}
p^{2}qr&&~~~~if~~~ G^{ab}~~is~~cyclic\\
{p}^{4}qr&& ~~~~Otherwise\\
\end{array}\right.\]

\noindent($v$)~ If $G$ is a group of order $p^{2}qr$ and $pq=6$, then the order of $G\otimes G$ is similar to the part $(iv)$.
 Moreover if $|G'|=4~or~4r$, then $|G\otimes G|=24r$.\end{lem}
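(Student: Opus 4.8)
The plan is to reduce every case to the two structural results quoted above, fed with the descriptions of $G'$ and $M(G)$ already obtained in Lemmas \ref{c} and \ref{d}. The organising observation is that in all but one situation $|G'|$ and $|G^{ab}|$ are coprime: for square-free order this is automatic, and in parts $(ii)$--$(v)$ it is checked case by case, the only exception being $|G|=pq^{2}$ with $G'=\mathbb{Z}_{q}$. Hence, by Schur--Zassenhaus, $G'$ has a complement isomorphic to $G^{ab}$, and this complement is \emph{cyclic} exactly when $G^{ab}$ is cyclic, i.e. exactly when the Sylow $p$-subgroup of $G^{ab}$ is $\mathbb{Z}_{p^{2}}$ rather than $\mathbb{Z}_{p}\times\mathbb{Z}_{p}$. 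For the exceptional case $|G|=pq^{2}$, $G'=\mathbb{Z}_{q}$, Sylow theory forces $G=\mathbb{Z}_{q}\times F$ with $F$ non-abelian of order $pq$, and then the subgroup generated by the $\mathbb{Z}_{q}$-direct factor together with a cyclic complement of $F'$ in $F$ is a cyclic ($\cong\mathbb{Z}_{pq}$) complement of $G'$ in $G$; the same trick produces the cyclic complement needed for the cases $|G'|=4$ or $4r$ of part $(v)$, where $G$ decomposes as a direct product (typically $G\cong A_{4}\times\mathbb{Z}_{r}$).

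Once a cyclic complement is available, Theorem \ref{a} gives $|G\otimes G|=|G|\,|M(G)|$, and substituting $M(G)$ from Lemma \ref{d} settles all the ``cyclic $G^{ab}$'' lines: $|G\otimes G|=n$ in $(i)$; $p^{2}q$ (for $G^{ab}=\mathbb{Z}_{p^{2}}$) and $24$ (for $G\cong A_{4}$) in $(ii)$; all three values in $(iii)$; $p^{2}qr$ in $(iv)$ and $12r$ in $(v)$ when $G^{ab}$ is cyclic; and $24r$ in $(v)$ when $|G'|=4$ or $4r$. For the remaining lines $G^{ab}$ is non-cyclic, so its Sylow $p$-subgroup is $\mathbb{Z}_{p}\times\mathbb{Z}_{p}$ and the other prime-parts of $G^{ab}$ are cyclic. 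When $p$ is odd (so that \emph{all} primes dividing $|G|$ are odd, since $p<q<r$), Theorem \ref{b} applies directly: only the prime $p$ contributes to $\prod p_{i}^{d_{i}}$, with $k_{p}=2$, $e_{p1}=e_{p2}=1$, hence $d_{p}=1$, so $|G\otimes G|=p\,|G|\,|M(G)|=p\cdot p^{2}q\cdot p=p^{4}q$ in $(ii)$ and $=p\cdot p^{2}qr\cdot p=p^{4}qr$ in $(iv)$.

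The delicate part is $p=2$ with $G^{ab}$ non-cyclic, where Theorem \ref{b} is unavailable; this covers the remaining line of $(ii)$, part of $(iv)$ (then necessarily $q\neq 3$), and the corresponding cases of $(v)$. Here the plan is to pin down the isomorphism type of $G$: the Sylow $2$-subgroup is then $\mathbb{Z}_{2}\times\mathbb{Z}_{2}$ (it surjects onto the $2$-part of $G^{ab}$ and has order $4$), $G'$ has odd order, so $G$ splits as a direct product in which one factor is a $2$-group and the complementary factors are dihedral or metacyclic of odd derived part. Concretely, in $(ii)$ this forces $G\cong\mathbb{Z}_{2}\times D_{2q}$, and in $(iv)$--$(v)$ one meets $G\cong\mathbb{Z}_{2}\times(\text{square-free group of order }2qr)$, $G\cong\mathbb{Z}_{2}^{2}\times K$ with $K$ non-abelian of order $qr$, or $G\cong D_{2q}\times D_{2r}$. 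One then computes $G\otimes G$ from the direct-product decomposition of the non-abelian tensor square (the conjugation actions across the factors being trivial), in which the cross terms reduce to $A^{ab}\otimes_{\mathbb{Z}}B^{ab}$, while the diagonal blocks are handled by $\mathbb{Z}_{m}\otimes\mathbb{Z}_{m}\cong\mathbb{Z}_{m}$, $\mathbb{Z}_{2}^{2}\otimes\mathbb{Z}_{2}^{2}\cong\mathbb{Z}_{2}^{4}$, and Theorem \ref{a} applied to the dihedral factors (giving $|D_{2q}\otimes D_{2q}|=2q$). Collecting orders yields $16q=p^{4}q$ in $(ii)$ and $16qr=p^{4}qr$ in $(iv)$--$(v)$, matching the stated values.

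Alternatively, the $p=2$ cases can be done without the product formula, straight from the displayed commutative diagram of Section 2, via $|G\otimes G|=|\nabla(G)|\,|M(G)|\,|G'|$; the difficulty is then concentrated in determining $|\nabla(G)|$ as the appropriate quotient of $\Gamma(\mathbb{Z}_{2}\times\mathbb{Z}_{2})$, equivalently in controlling the kernel of $\Gamma(G^{ab})\to G\otimes G$. I expect this $p=2$ bookkeeping --- the loss of Theorem \ref{b} and the need to recover the ``extra factor of $p$'' by hand --- to be the only genuinely delicate point; everything else is an application of Theorems \ref{a} and \ref{b} once the complement (or direct-product) structure has been exhibited.
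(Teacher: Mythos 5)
Your proposal is correct and, for the bulk of the cases, coincides with the paper's own argument: cyclic complement via Schur--Zassenhaus plus Theorem \ref{a} when $G^{ab}$ is cyclic and coprime to $G'$, and Theorem \ref{b} when $G^{ab}$ is non-cyclic with $p$ odd. Where you genuinely diverge is in the two delicate sub-cases, and there the comparison is instructive. For $p=2$ with $G^{ab}=\mathbb{Z}_2\times\mathbb{Z}_2$ (parts ($ii$), ($iv$), ($v$)), the paper stays inside the commutative diagram of Section 2: it bounds $e(\nabla(G))$ by $e(G)=2q$, compares with $e(\Gamma(G^{ab}))=4$, concludes $\nabla(G)\cong(\mathbb{Z}_2)^3$, and reads off $|G\otimes G|=|\nabla(G)|\,|M(G)|\,|G'|$; you instead classify $G$ up to isomorphism ($\mathbb{Z}_2\times D_{2q}$, $\mathbb{Z}_2^2\times K$, $D_{2q}\times D_{2r}$, etc.) and apply the direct-product formula for the non-abelian tensor square from \cite{br3}. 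Your route costs some case analysis (you should verify that the listed decompositions are exhaustive --- they are, since a $\mathbb{Z}_2\times\mathbb{Z}_2$ acting on a group of order $qr$ always splits off a direct factor or lands in $\{\pm1\}\times\{\pm1\}$), but it is fully self-contained, whereas the paper's claim that the surjection $\Gamma(G^{ab})\to\nabla(G)$ has kernel of order exactly $4$ is asserted rather than proved. For $|G|=pq^2$ with $G'=\mathbb{Z}_q$ --- the one case where $|G'|$ and $|G^{ab}|$ fail to be coprime --- the paper invokes Theorem \ref{b} for $p$ odd and dismisses $p=2$ as ``easy to see''; your construction of an explicit cyclic complement $\mathbb{Z}_{pq}$ from the decomposition $G\cong\mathbb{Z}_q\times F$ (which follows from the coprime-action splitting $Q=[Q,P]\times C_Q(P)$) feeds Theorem \ref{a} uniformly for all $p$ and is arguably the cleaner argument. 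Both approaches yield the stated orders; no gap.
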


\begin{proof} ($i$)~ Since $|G'|$ and $|G^{ab}|$ are coprime,
 $G'$ has a cyclic complement and
$|G\otimes G|=|G|$ by Schur-Zassenhaus Lemma, Theorem \ref{a} and Lemma \ref{d}. \\ ($ii$)~ Suppose that
$G^{ab}=\mathbb{Z}_{p^{2}}$. In this case $|G'|$ and $|G^{ab}|$ are
coprime and $|G\otimes G|=|G|=p^{2}q$ by Lemma \ref{d}.

Now assume that $G^{ab}=\mathbb{Z}_{p}\times \mathbb{Z}_{p}$, if $p\ne 2$, then $|G\otimes G|=p^{4}q$ by Theorem \ref{b} and Lemma
\ref{d}.
Otherwise $e(\nabla(G))$ divides $e(G)=2q$
and $e(\Gamma(G^{ab}))=4$. Hence $e(\nabla(G))=2$ and
$\nabla(G)=\mathbb{Z}_{2}\times \mathbb{Z}_{2}\times \mathbb{Z}_{2}$
and it implies that $|G\otimes G|=2^{4}q$. \\
We note that if  $G^{ab}=\mathbb{Z}_{3}$, then $G=A_{4}$ and this
case is computed in \cite{br3}.

\noindent ($iii$)~ Suppose that $G'$ is cyclic of order $q^{2}$, so we have $|G\otimes G|=pq^{2}$ by Theorem \ref{a} and Lemma
\ref{d}.
The case that $G'={\mathbb{Z}}_{q}\times {\mathbb{Z}}_{q}$ and $M(G)=0$ holds similarly.
\\Now if $|G'|=q$ and $p\ne2$, then $|G\otimes G|=pq^{2}$ by Theorem \ref{b} and Lemma \ref{d}.
 Also, it is easy to see that $|G\otimes G|=2q^{2}$, if $p=2$.

\noindent($iv$)~ Suppose that $G^{ab}$ is cyclic, so $|G\otimes G|=|G|$ by Theorems \ref{a} and Lemma \ref{d}.

 The other case is similar to the case $(ii)$.

\noindent($v$)~ It is straight forward.\end{proof}

 We are ready to the proof of main theorems, notice that if $G'$ is cyclic, then $G\otimes G$ is abelian.

{{\bf{\emph{Proof of Theorem A.}}}} Since $G$ is a group of
 square free order, then $G\otimes G$ is abelian of order $n$ by Lemmas \ref{c} and \ref{e}.\\

{{\bf{\emph{Proof of Theorem B.}}}} ($i$)~ It is clear that
$G\otimes G$ is an abelian group of order $p^{2}q$ by Lemma \ref{e}. On
the other hand $e(G\otimes G)$ divides $|G\otimes G|$ and the
epimorphism $\pi:G\otimes G\longrightarrow G^{ab}\otimes G^{ab}$
implies that $e(G\otimes G)=p^{2}q$. Hence $G\otimes G \cong
{\mathbb{Z}}_{p^{2}q}$ as required.\\ ($ii$)~  It is as same as the case $(i)$.\\ ($iii$)~ If $G'=\mathbb{Z}_{2}\times
\mathbb{Z}_{2}$, then $G=A_{4}$ and one may refer to the Table $1$
given in \cite{br3}.\\

{{\bf{\emph{Proof of Theorem C.}}}}~($i$)~ The exponent of $G\otimes G$ is equal to
$pq^{2}$, so the proof follows by
Lemma \ref{e}. \\ ($ii$)~ Since $\displaystyle\frac{G\otimes
G}{J_{2}(G)}=G'$ is abelian, we have $(G\otimes G)'\leq J_{2}(G)$,
where $|J_{2}(G)|=p$. In addition the epimorphism $\pi$ implies that
$(G\otimes G)'$ is in it's kernel which is of order $q^{2}$, so
$(G\otimes G)'=1$. The
result holds by Lemma \ref{e} and the fact that $e(G\otimes G)=pq$. \\ ($iii$)~  It is clear that
$\nabla(G)$ and the $q$-sylow subgroup of $G\otimes G$, say $H$, are
normal. Thus by Lemma \ref{e}, $$G\otimes G\cong \nabla(G) \times H,$$
in which $\nabla(G)=\mathbb{Z}_{p}$ and $H$ is of order $q^{3}$.

{{\bf{\emph{Proof of Theorem D.}}}}
  ($i$)~ If $G^{ab}$ is cyclic, then the epimorphism $\pi$ implies that $e(G\otimes G)=p^{2}qr$, so the result follows by Lemma
  \ref{e}.

If $G^{ab}$ is not cyclic, the proof is similar.\\ ($ii$)~
 Assuming $G^{ab}=\mathbb{Z}_{p^{2}}$, then $|\nabla(G)|=|J_{2}(G)|=p^{2}$ and it can be easily seen that $Ker\pi$ is
isomorphic to $G'$ and of order $qr$.
 Moreover since $e(G\otimes G)=p^{2}qr$, Lemma \ref{e} implies that $$G\otimes G\cong \nabla(G)\times Ker\pi\cong \nabla(G)\times
 G'\cong {\mathbb{Z}}_{p^{2}}\times G'.$$ If $G^{ab}=\mathbb{Z}_{p}\times \mathbb{Z}_{p}$, then
 $e(G\otimes G)=pqr$, $|J_{2}(G)|=p^{4}$ and $Ker\pi\cong G'$.
 Therefore Lemma \ref{e} deduces that
   \[G\otimes G\cong J_{2}(G)\times Ker\pi\cong (\mathbb{Z}_{p})^{4}\times G'.\]

Finally, we note that in Theorem D, if $pq=6$, i.e.
$|G|=12r$, then $|G'|=3,r,3r,4$ or $4r$. In the cases that $|G'|=3,r$ or $3r$, the structure of $G\otimes G$ is similar to Theorem D.
In other cases one may show that $G\otimes G\cong {\mathbb{Z}}_{3r}\times Q_{2}$, where $Q_{2}$ is the quaternion group of order 8.

\bibliographystyle{alpha}

\end{document}